\numberwithin{equation}{section}
\def\Arf{\mathop{\mathrm{Arf}}}
\def\ABK{\mathop{\mathrm{ABK}}}
\def\Irr{\mathop{\mathrm{Irr}}}
\def\qdim{\mathop{\mathrm{qdim}}}
\def\bC{\mathbb{C}}
\def\bH{\mathbb{H}}
\def\bR{\mathbb{R}}
\def\bZ{\mathbb{Z}}
\def\cS{\mathcal{S}}
\def\cQ{\mathcal{Q}}
\def\BW{\mathrm{BW}}
\def\Hom{\mathrm{Hom}}
\def\tr{\mathrm{tr}}
\def\Cl{\mathrm{Cl}}
\def\B{\mathrm{B}}
\def\Z#1{\mathbb{Z}/#1\mathbb{Z}}
\newtheorem{theorem}[equation]{Theorem}
\newtheorem{lemma}[equation]{Lemma}
\begin{document}

\title[The super Frobenius-Schur indicator]{The super Frobenius-Schur indicator and \\
finite group gauge theories on pin$^-$ surfaces}
\author{Takumi Ichikawa}
\author{Yuji Tachikawa}
\date{February 25, 2020.\relax\quad \textit{Preprint number:} IPMU-20-0013}
\address{Kavli Institute for the Physics and Mathematics of the Universe \textsc{(wpi)},
  The University of Tokyo,
  5-1-5 Kashiwanoha, Kashiwa, Chiba, 277-8583,
  Japan
}
\email{takumi.ichikawa@ipmu.jp, yuji.tachikawa@ipmu.jp}
\thanks{The research of Yuji Tachikawa is  supported by in part supported  by WPI Initiative, MEXT, Japan at IPMU, the University of Tokyo,
and in part by JSPS KAKENHI Grant-in-Aid (Wakate-A), No.17H04837 
and JSPS KAKENHI Grant-in-Aid (Kiban-S), No.16H06335.
The authors thank Arun Debray for careful reading and instructive comments on an earlier draft of this paper.
The authors also thank John Murray for helpful email exchanges after v1 appeared on the arXiv, informing them the relevance of \cite{Gow} to our work.}

\begin{abstract}
It is well-known that the value of the Frobenius-Schur indicator $|G|^{-1} \sum_{g\in G} \chi(g^2)=\pm1$
of a real irreducible representation of a finite group $G$ determines which of the two types of real representations it belongs to, i.e.~whether it is strictly real or quaternionic.
We study the extension to the case when a homomorphism $\varphi:G\to \Z2$ gives the group algebra $\bC[G]$ the structure of a superalgebra.
Namely, we construct of a super version of the Frobenius-Schur indicator whose value for a real irreducible super representation is an eighth root of unity,
distinguishing which of the eight types of irreducible real super representations described in \cite{Wall} it belongs to.
We also discuss its significance in the context of two-dimensional finite-group gauge theories on  pin$^-$ surfaces.
\end{abstract}
\maketitle

\tableofcontents

\section{Introduction}
It is a classic result of Frobenius and Schur \cite{FS} that for a finite group $G$ and its irreducible representation $\rho$ over $\bC$ whose character we denote by $\chi$, the Frobenius-Schur indicator defined by \begin{equation}
S(\rho):=\frac{1}{|G|} \sum_{g\in G} \chi(g^2)
\end{equation} 
takes values in $\{+1,0,-1\}$, depending on whether $\rho$ is  strictly real, complex or quaternionic. 
A slight reformulation can be given in terms of the simple summand $A_\rho$ of the group algebra $\bC[G]$.
When $\rho\simeq\bar\rho$, $A_\rho$ has a conjugate-linear involution $*$, whose fixed point is a central simple algebra over $\bR$.
From Wedderburn's theorem, this is a matrix algebra over a division algebra $D$ over $\bR$ whose center is $\bR$, which is either $\bR$ or $\bH$.
These algebras $\bR$, $\bH$ form the Brauer group $\B(\bR)=\{\bR,\bH\}\simeq \Z2$ of $\bR$.
The value $S(\rho)=\pm1$ then determines to which element of $\B(\bR)$ the simple summand $A_\rho$ corresponding to the representation $\rho$ belongs.

In this paper we generalize these statements to the group superalgebras.\footnote{%
Various other generalizations of the Frobenius-Schur indicator were studied in the literature.
Firstly, we can consider generalized Frobenius-Schur indicators for finite groups, see e.g.~\cite{BumpGinzburg} and \cite[Sec.~11]{georgieva2018klein}.
Secondly, we can extend the concept of the Frobenius-Schur indicator to fusion categories, see e.g.~\cite[Sec.~3]{fuchs2001category},
which is further extended to fermionic fusion categories in e.g.~\cite[Sec.~3.3]{Bultinck_2017}.
This last generalization should agree with ours when specialized to fermionic fusion categories arising from our superalgebra.
It would be interesting to check this.
}
Let us first recall the Brauer-Wall group $\BW(F)$ of a field $F$ \cite{Wall,NotesOnSpinors}.
A division superalgebra $D=D_0\oplus D_1$ 
is a superalgebra such that every nonzero element in $D_0$ or $D_1$ is invertible.
Consider a simple superalgebra $A=A_0\oplus A_1$ over $F$,
with its unique irreducible supermodule $\rho$.
The super version of Schur's lemma says that $D_A:=\Hom_A(\rho,\rho)$ 
is a division superalgebra over $F$.
Simple superalgebras $A$ such that the center of the even part $(D_A)_0$ is $F$ itself
are called central simple,
and central simple superalgebras under the equivalence relation $A\sim A' \Leftrightarrow D_A \simeq D_{A'}$
form a finite group $\BW(F)$ under tensor product.
This is the Brauer-Wall group of $F$.
It was determined in \cite{Wall} that $\BW(\bC)=\Z2$ and $\BW(\bR)=\Z8$.
The Clifford algebras over $\bC$ and $\bR$, with the corresponding periodicity $2$ and $8$, cover all ten cases. 

Let us now specialize to the case of superalgebras associated to a group,
and formulate our main theorem.
Let $G$ be a finite group, 
$\varphi:G\to \Z2$ a homomorphism or equivalently a one-cocycle,
and $\alpha:G\times G \to \Z2$ a two-cocycle. 
$\varphi$ gives a decomposition of $G$ as $G=G_0\sqcup G_1$.
We define $\bR[G]_{\varphi,\alpha}$ to be the superalgebra over $\bR$ generated by elements $e_g$ for $g\in G$ such that $e_g$ is even or odd depending on $\varphi(g)=0$ or $1$,
and $e_g e_h= (-1)^{\alpha(g,h)} e_{gh}$.
We denote its complexification by $A:=\bC[G]_{\varphi,\alpha}$.
This algebra is equipped with a conjugate linear involution $a\mapsto a^*$, inherited from the complex conjugation of $\bC$.
These superalgebras are semisimple, as can be seen e.g.~by using the unitary trick.
For a supermodule $\rho$ defined on $V=V_0\oplus V_1$, we define its character as $\chi(g):=\tr_V \rho(e_g)$.

Let $(\rho,V)$ be an irreducible supermodule of $A=\bC[G]_{\varphi,\alpha}$.
The corresponding subsuperalgebra $A_\rho$ of $A$ is central simple,
and we denote its class $[A_\rho]\in \BW(\bC)$ by $q(\rho)=0,1$.
It is known that $\rho$ is irreducible as an ungraded module if $q(\rho)=0$,
and $\rho|_{V_0}$ is irreducible as a module of $\bC[G_0]_\alpha$ if $q(\rho)=1$.

We denote by $\bar\rho$ the complex conjugate supermodule of $\rho$.
When $\bar\rho \simeq \rho$, $\rho$ is called real.
Otherwise $\rho$ is called complex.
Real irreducible supermodules of $\bC[G]_{\varphi,\alpha}$ can be classified by $\BW(\bR)=\Z8$,
just as  irreducible supermodules of $\bR[G]_{\varphi,\alpha}$ can be.

We show the following:
\begin{theorem}\label{main}
The super Frobenius-Schur indicator of an irreducible supermodule $\rho$, defined by $$
\cS(\rho) := \frac{1}{\sqrt{2}^{q(\rho)}|G|} \sum_{g\in G} \sqrt{-1}^{\varphi(g)} (-1)^{\alpha(g,g)} \chi(g^2),
$$
is either zero or an eighth root of unity.
It is zero if  $\rho$ is complex
and  non-zero if $\rho\simeq \bar \rho$.
In the latter case, its value $\cS(\rho)\in \{ x^8=1 \mid x\in \bC\}$
determines to which element of $\BW(\bR)\simeq \Z8$ the supermodule belongs.
\end{theorem}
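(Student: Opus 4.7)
The plan is to mimic the classical Frobenius-Schur argument in three stages. For the first stage, I would introduce the element
\[ \omega := \sum_{g\in G} \sqrt{-1}^{\varphi(g)} e_g^2 = \sum_{g\in G} \sqrt{-1}^{\varphi(g)}(-1)^{\alpha(g,g)} e_{g^2} \in A, \]
which lies in the even part $A_0$ because $\varphi(g^2)=0$. A direct verification---using $e_h e_g e_h^{-1} = c(h,g)\,e_{hgh^{-1}}$ with $c(h,g)=\pm1$ (so $c(h,g)^2=1$) and $\varphi(hgh^{-1})=\varphi(g)$---shows that $\omega$ is central in $A$. Schur's lemma then gives $\rho(\omega)=c_\rho\,\mathrm{id}_V$ for some $c_\rho\in\bC$, and comparing traces yields
\[ \cS(\rho) = \frac{c_\rho \dim V}{\sqrt{2}^{q(\rho)}\,|G|}. \]

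For the vanishing when $\rho\not\simeq\bar\rho$, I would rewrite $\tr_V\rho(e_g)^2 = \tr_{V\otimes V}\bigl(\tau_s\cdot(\rho(e_g)\otimes\rho(e_g))\bigr)$ for an appropriate super-swap $\tau_s$ on $V\otimes V$. Summing over $g\in G$ with weight $\sqrt{-1}^{\varphi(g)}$ produces a $G$-invariant operator on $V\otimes V$ whose trace picks out the dimension of $\Hom_A(V,\bar V)$ up to a uniform factor. This Hom-space is zero exactly when $\rho$ is complex, forcing $\cS(\rho)=0$ in that case.

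For the identification with $\BW(\bR)$ when $\rho\simeq\bar\rho$, the simple summand $A_\rho^\bR\subset \bR[G]_{\varphi,\alpha}$ with $A_\rho^\bR\otimes_\bR\bC\simeq A_\rho$ is a central simple superalgebra over $\bR$ of some class $k\in\BW(\bR)=\Z8$. I would show that $c_\rho$ depends only on the super Morita class of $A_\rho^\bR$, reducing the problem to the eight super division algebras, namely the real Clifford algebras $\Cl_0,\ldots,\Cl_7$. A direct computation on eight minimal examples---each Clifford algebra realised as a small group superalgebra with tuned $\varphi$ and $\alpha$---should then yield the eight distinct $8$th roots of unity in bijection with $\Z8$.

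The main obstacle is the Morita invariance in the last stage: the normalisation $\sqrt{2}^{q(\rho)}$ must be calibrated precisely so that tensoring $A_\rho^\bR$ with a real matrix superalgebra does not alter $\cS$, and so that the two cases $q(\rho)\in\{0,1\}$ glue into a single $\Z8$-valued invariant. A conceptually clean way to handle both is to identify $c_\rho$ with the value $T^2$ (or a supertrace of $T$) of an antilinear $G$-equivariant operator $T:V\to V$, so that the parity of $T$ and the phase of $T^2$ jointly record the eight Wall types.
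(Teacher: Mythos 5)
Your plan is genuinely different in architecture from the paper's proof, and the first stage is a nice observation that the paper does not use: the element $\omega=\sum_g \sqrt{-1}^{\varphi(g)}e_g^2$ really is even and central in $A$ (the conjugation $e_h e_g e_h^{-1}=c(h,g)e_{hgh^{-1}}$ squares away the sign, and $\varphi(hgh^{-1})=\varphi(g)$), so $\cS(\rho)=c_\rho\qdim(V)/|G|$ is an honest scalar attached to the simple summand $A_\rho$. The paper instead never isolates $\omega$; it proves a multiplicativity lemma $\cS(\rho\sigma)=\cS(\rho)\cS(\sigma)$ for the supermodule product coming from $\bC[G\times H]_{\varphi^\circ,\alpha^\circ}$, computes $\cS=e^{2\pi i/8}$ for the $\Cl(1)$-module, and then shifts any $\rho$ to the trivial class in $\BW(\bR)$ (or to $q=0$ for complex $\rho$) by tensoring with Clifford modules, reducing the whole theorem to the ordinary Frobenius--Schur indicators of $\rho$ on $G$ and of $\rho|_{V_0}$ on $G_0$ for an induced module.

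The gap in your argument is concentrated in Stages 2 and 3, and you have correctly sensed where it lies. In Stage 2, the identity $\tr_V\rho(e_g)^2=\tr_{V\otimes V}\bigl(\tau_s\cdot(\rho(e_g)\otimes\rho(e_g))\bigr)$ does not hold for the Koszul-signed swap $\tau_s(v\otimes w)=(-1)^{|v||w|}w\otimes v$: a short index computation gives $\tr_{V\otimes V}\bigl(\tau_s(A\otimes A)\bigr)=\mathrm{str}_V(A^2)$, the \emph{super}trace, and the plain swap gives yet another parity-twisted sum, so some correction factor must be built into $\tau_s$ and carried through. (The good news, which you implicitly use, is that $g\mapsto\sqrt{-1}^{\varphi(g)}\rho(e_g)\otimes\rho(e_g)$ is an honest, non-projective $G$-action on $V\otimes V$ because the Koszul sign of $\rho(e_g)\otimes\rho(e_g)$ cancels the failure of $\sqrt{-1}^{\varphi}$ to be multiplicative; this is exactly the cocycle bookkeeping in the paper's Lemma~\ref{grouptensor}.) In Stage 3, ``show that $c_\rho$ depends only on the super Morita class of $A_\rho^{\bR}$'' is essentially a restatement of the theorem, not a reduction of it: $\omega$, $|G|$, and the normalization all depend on $(G,\varphi,\alpha)$, and the centrality of $\omega$ by itself gives no mechanism for comparing $c_\rho$ across Morita-equivalent summands of \emph{different} group superalgebras. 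This is precisely what the paper's multiplicativity lemma supplies: tensoring with $\Cl(1)$ multiplies $\cS$ by $e^{2\pi i/8}$ \emph{and} shifts the $\BW(\bR)$ class by $1$, which simultaneously proves the Morita-type invariance and pins the value to an eighth root of unity once the class-$0$ base case is settled. Without an analogue of that lemma---or a concrete realization of your closing suggestion relating $c_\rho$ to the square and parity of the antilinear intertwiner $T$---Stage 3 remains a plan rather than a proof.

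If you want to keep your Stage 1 and still close the argument, the most economical patch is to import the paper's two lemmas: the cocycle identity $\alpha^\circ=\alpha+\alpha'+\varphi\varphi'$ for supermodule products and the multiplicativity $\cS(\rho\sigma)=\cS(\rho)\cS(\sigma)$; your $\omega$-centrality then makes the multiplicativity easy to check, since $\omega_{G\times H}$ projects to $\omega_G\hat\otimes\omega_H$ up to the normalization that the $\sqrt2^{\,q}$ factors are designed to absorb.
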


The rest of the paper consists of two sections.
In Section~\ref{sec:proof},
we provide a proof of the main theorem.
In Section~\ref{sec:background},
we describe how the super Frobenius-Schur indicator appears in the context of two-dimensional finite-group gauge theories on pin$^-$ surfaces.

\section{Proof of the main theorem}
\label{sec:proof}

We start by quoting the structure of central simple complex or real superalgebras $A=A_0\oplus A_1$ and the corresponding Brauer-Wall groups $\BW(\bC)=\Z2$ or $\BW(\bR)=\Z8$, as described in \cite{Wall,NotesOnSpinors}.
For simplicity of the presentation, we assume that any superalgebra $A$ contains a regular element in $A_1$.

Over $\bC$, any central simple superalgebra $A=A_0\oplus A_1$ is either of the form \begin{equation}
M(n) = \{    
\begin{pmatrix}
a &b \\
c & d
\end{pmatrix} 
\} 
\end{equation} or  \begin{equation}
Q(n)=\{
\begin{pmatrix}
a & b \\
b & a
\end{pmatrix} 
\}
\end{equation}
where $a,b,c,d$ are $n\times n$ matrices.
$a$ and $d$ provide even elements and $b$ and $c$ provide odd elements.
$M(n)$ belongs to $0\in \BW(\bC)$ and $Q(n)$ belongs to $1\in \BW(\bC)$.
We denote the type in $\BW(\bC)$ by $q\in \BW(\bC)$.
We note that $Q(1)$ is the Clifford algebra $\Cl(1)$.

More abstractly, any central simple superalgebra over $\bC$
either admits a nonzero graded central element $u\in A_0$
or a nonzero central element $u\in A_1$,
depending on whether $q=0$ or $1$.
Here, an element $x$ is called graded central if $yx=xy$ for all $y\in A_0$ and $yx=-xy$ for all $y\in A_1$.
The special element $u$ is determined up to a scalar multiplication.
We note that $A$ or $A_0$ is simple as an ungraded algebra when $q=0$ or $q=1$, respectively.

By direct computation, one finds that 
\begin{equation}
\begin{aligned}
M(m)\hat\otimes M(n)&=M(mn),&
M(m)\hat\otimes Q(n)&=Q(mn),\\
Q(m)\hat\otimes M(n)&=Q(mn),&
Q(m)\hat\otimes Q(n)&=M(mn)\oplus M(mn)
\end{aligned}
\label{tensor}
\end{equation}
where $\hat\otimes$ denotes the super tensor product of two superalgebras.

An irreducible supermodule $(\rho,V)$ of the semisimple superalgebra $A=\bC[G]_{\varphi,\alpha}$,
determines a simple summand $A_\rho$ of $A$.
This allows us to associate an element $q\in \BW(\bC)$ to the supermodule $\rho$.
We note that $\rho$ is irreducible as an ungraded module when $q=0$,
and $\rho|_{V_0}$ is irreducible as an ungraded module of $\bC[G_0]_{\alpha}$ when $q=1$.
We recall that $G_0$ is the inverse image of $0\in \Z2$ under $\varphi:G\to \Z2$.

Let us take another finite group $H$,
a homomorphism $\varphi':H\to \Z2$,
and a $\Z2$-valued two-cocycle $\alpha'$ of $H$.
Let us consider a supermodule $\rho$ of $\bC[G]_{\varphi,\alpha}$
and a supermodule $\sigma$ of $\bC[H]_{\varphi',\alpha'}$.
\begin{lemma}\label{grouptensor}
$\rho\hat\otimes\sigma$ is a supermodule of $\bC[G\times H]_{\varphi^\circ,\alpha^\circ}$
where $$
\varphi^\circ=\varphi+\varphi',\qquad
\alpha^\circ=\alpha+\alpha'+\varphi \varphi'.
$$
Here, on the right hand side, $\varphi$, $\varphi'$, $\alpha$, $\alpha'$ are appropriately pulled back from $G$ and $H$ to $G\times H$.
Then, to take the product of $\varphi$ and $\varphi'$, we regard them as one-cocycles so that the result is a two-cocycle, i.e., $(\varphi \varphi')(g,h):=\varphi(g)\varphi'(h)$.
\end{lemma}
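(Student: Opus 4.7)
The plan is to define $\rho\hat\otimes\sigma$ as the natural action on $V\otimes W$ given by the Koszul sign rule and then to read off the one- and two-cocycles on $G\times H$ it satisfies. Equip $V\otimes W$ with the standard super grading $(V\otimes W)_k := \bigoplus_{i+j\equiv k} V_i\otimes W_j$ and define, on homogeneous vectors,
\[
R_{(g,h)}(v\otimes w) := (-1)^{\varphi'(h)\,|v|}\, \rho(e_g)v \otimes \sigma(e_h)w.
\]
By construction $R_{(g,h)}$ is homogeneous of degree $\varphi(g)+\varphi'(h) = \varphi^\circ(g,h)$, matching the claimed one-cocycle.

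The main step is to compute the composition $R_{(g,h)}\circ R_{(g',h')}$. The Koszul sign $(-1)^{\varphi'(h)(|v|+\varphi(g'))}$ arising from moving $\sigma(e_h)$ past $\rho(e_{g'})v$, combined with the prior sign $(-1)^{\varphi'(h')|v|}$ and the algebra relations $\rho(e_g)\rho(e_{g'})=(-1)^{\alpha(g,g')}\rho(e_{gg'})$ and $\sigma(e_h)\sigma(e_{h'})=(-1)^{\alpha'(h,h')}\sigma(e_{hh'})$, yields after cancellation
\[
R_{(g,h)}\, R_{(g',h')} = (-1)^{\alpha(g,g') + \alpha'(h,h') + \varphi'(h)\varphi(g')}\, R_{(gg',\,hh')}.
\]
This already exhibits $V\otimes W$ as a supermodule of $\bC[G\times H]$ twisted by $\varphi^\circ$ and by the two-cocycle $\tilde\alpha^\circ\bigl((g,h),(g',h')\bigr) := \alpha(g,g') + \alpha'(h,h') + \varphi'(h)\varphi(g')$.

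The final step is to identify $\tilde\alpha^\circ$ with the stated $\alpha^\circ = \alpha + \alpha' + \varphi\varphi'$ on the cocycle level. Introduce the one-cochain $\beta:G\times H\to\Z2$ given by $\beta(g,h):=\varphi(g)\varphi'(h)$. A short mod-$2$ computation using that $\varphi$ and $\varphi'$ are homomorphisms gives
\[
d\beta\bigl((g,h),(g',h')\bigr) = \varphi(g)\varphi'(h') + \varphi(g')\varphi'(h),
\]
whence $\alpha^\circ - \tilde\alpha^\circ = d\beta$. Rescaling by $\tilde R_{(g,h)} := (-1)^{\beta(g,h)} R_{(g,h)}$ therefore produces a supermodule structure satisfying the relations of $\bC[G\times H]_{\varphi^\circ,\alpha^\circ}$ exactly as stated.

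The main obstacle is purely bookkeeping: one must track the Koszul signs carefully and recognize that the asymmetric expression $\varphi\varphi'$ in the lemma differs from the output of the naive super tensor product by an explicit coboundary — precisely the coboundary the rescaling $\beta$ kills.
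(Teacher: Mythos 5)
Your proof is correct and takes essentially the same approach the paper's one-line ``Direct computation'' implicitly refers to, but you supply a detail the paper leaves unremarked. With the standard Koszul sign convention $(a\otimes b)(a'\otimes b') = (-1)^{|b|\,|a'|}(aa')\otimes(bb')$, the cross term one obtains directly is $\varphi'(h)\varphi(g')$ --- i.e.\ the cocycle $\varphi'\cup\varphi$ --- whereas the lemma states $\varphi\cup\varphi'$, that is $\varphi(g)\varphi'(h')$. You correctly observe that these two $\Z2$-valued two-cocycles on $G\times H$ are cohomologous via the one-cochain $\beta(g,h)=\varphi(g)\varphi'(h)$, and that the rescaling $\tilde R_{(g,h)}=(-1)^{\beta(g,h)}R_{(g,h)}$ produces a module structure realizing the cocycle exactly as written. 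Both your Koszul-sign computation and your coboundary computation check out ($d\beta((g,h),(g',h'))=\varphi(g)\varphi'(h')+\varphi(g')\varphi'(h)\pmod 2$, matching the difference of the two cocycles). The only thing to keep in mind is that for the downstream applications in the paper (Lemma~\ref{clifford0}, the multiplicativity Lemma~\ref{product}, and the bordism-invariant addition law) only the cohomology class of $\alpha^\circ$ matters, so the rescaling is not strictly needed to use the lemma; but since the statement specifies a cocycle on the nose, your rescaling step makes the claim literally true and is a genuine clarification over ``direct computation.''
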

\begin{proof}
Direct computation.
\end{proof}

Let us assume that $\rho$ and $\sigma$ are irreducible.
According to \eqref{tensor},
$\rho\hat\otimes\sigma$ is irreducible or a direct sum of two isomorphic irreducible supermodules.
We denote by $\rho\sigma$ the isomorphism class of the irreducible supermodule(s) contained in $\rho\hat\otimes\sigma$.
\begin{lemma}\label{product}
The super Frobenius-Schur indicator is multiplicative in the sense that $$
\cS(\rho\sigma)=\cS(\rho)\cS(\sigma).
$$
\end{lemma}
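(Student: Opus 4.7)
The plan is to compute $\cS(\rho\hat\otimes\sigma)$ directly from the definition, using Lemma~\ref{grouptensor} to substitute the cocycles $\varphi^\circ,\alpha^\circ$, and then reduce to $\cS(\rho\sigma)$ via a short case analysis on the types $q(\rho),q(\sigma)\in\BW(\bC)$.

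I would first observe that since $\varphi(g^2)=2\varphi(g)\equiv 0\pmod 2$, both $e_{g^2}$ and $e_{h^2}$ are even. Under the natural identification $e_{(g^2,h^2)}=e_{g^2}\hat\otimes e_{h^2}$ the operator $(\rho\hat\otimes\sigma)(e_{(g^2,h^2)})$ is an external tensor product of two parity-preserving operators, so its trace factors:
$\chi_{\rho\hat\otimes\sigma}((g,h)^2)=\chi_\rho(g^2)\chi_\sigma(h^2)$.

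The second, key step is the pointwise identity $\sqrt{-1}^{\varphi(g)}\sqrt{-1}^{\varphi'(h)}=\sqrt{-1}^{\varphi^\circ(g,h)}(-1)^{\varphi(g)\varphi'(h)}$ for $\varphi(g),\varphi'(h)\in\{0,1\}$; this is a four-case check, the only nontrivial case being $\varphi(g)=\varphi'(h)=1$, where both sides equal $-1$. Combined with $\alpha^\circ((g,h),(g,h))=\alpha(g,g)+\alpha'(h,h)+\varphi(g)\varphi'(h)$ from Lemma~\ref{grouptensor}, the sign $(-1)^{\varphi(g)\varphi'(h)}$ carried by $\alpha^\circ$ cancels against the one arising from the mod-$2$ reduction of the exponent of $\sqrt{-1}^{\varphi^\circ}$, leaving the fully factorised expression
$\sqrt{-1}^{\varphi^\circ(g,h)}(-1)^{\alpha^\circ((g,h),(g,h))}=\bigl[\sqrt{-1}^{\varphi(g)}(-1)^{\alpha(g,g)}\bigr]\bigl[\sqrt{-1}^{\varphi'(h)}(-1)^{\alpha'(h,h)}\bigr]$.
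Substituting this and the character identity into the defining sum, the sum over $G\times H$ splits as a product of the sums over $G$ and $H$, giving $\widetilde\cS(\rho\hat\otimes\sigma)=\widetilde\cS(\rho)\widetilde\cS(\sigma)$, where $\widetilde\cS(\pi):=\sqrt{2}^{q(\pi)}\cS(\pi)=\frac{1}{|G|}\sum_g\sqrt{-1}^{\varphi(g)}(-1)^{\alpha(g,g)}\chi_\pi(g^2)$ is the unnormalised indicator.

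Finally I would conclude by case analysis using \eqref{tensor}. If $q(\rho)+q(\sigma)\le 1$, then $\rho\hat\otimes\sigma=\rho\sigma$ is irreducible with $q(\rho\sigma)=q(\rho)+q(\sigma)$, and dividing both sides by $\sqrt{2}^{q(\rho)+q(\sigma)}$ yields $\cS(\rho\sigma)=\cS(\rho)\cS(\sigma)$. If $q(\rho)=q(\sigma)=1$, then $\rho\hat\otimes\sigma=\rho\sigma\oplus\rho\sigma$ with $q(\rho\sigma)=0$, so $\widetilde\cS(\rho\hat\otimes\sigma)=2\widetilde\cS(\rho\sigma)$, and the factor $2=\sqrt{2}\cdot\sqrt{2}$ is exactly what is needed to absorb the two $\sqrt{2}$'s in the normalisations of $\cS(\rho)$ and $\cS(\sigma)$.

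The main obstacle is noticing the sign cancellation in the second step. Once it is in hand, the rest is bookkeeping: the specific placement of $\sqrt{-1}^{\varphi(g)}$ and $(-1)^{\alpha(g,g)}$ in the definition of $\cS$ is precisely what is needed to absorb the extra sign introduced by $\alpha^\circ$ in passing from $\bC[G]_{\varphi,\alpha}\hat\otimes\bC[H]_{\varphi',\alpha'}$ to $\bC[G\times H]_{\varphi^\circ,\alpha^\circ}$.
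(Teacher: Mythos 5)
Your proof is correct and is precisely the ``direct computation'' the paper invokes without elaboration. The three ingredients—factorisation of $\chi_{\rho\hat\otimes\sigma}((g,h)^2)$ using that $g^2,h^2$ are even so no Koszul sign intervenes, the identity $\sqrt{-1}^{\varphi(g)}\sqrt{-1}^{\varphi'(h)}=\sqrt{-1}^{\varphi(g)+\varphi'(h)\bmod 2}(-1)^{\varphi(g)\varphi'(h)}$ cancelling the extra $\varphi\varphi'$ term that Lemma~\ref{grouptensor} puts into $\alpha^\circ$, and the bookkeeping with $\sqrt{2}^{q}$ in the two cases of \eqref{tensor}—are exactly what is needed, and you have verified each of them accurately.
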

\begin{proof}
Direct computation.
\end{proof}

Let us further assume that $(\rho,V)$ is real, i.e.~$\rho\simeq \bar\rho$.
In this case the irreducible representations are classified into eight types in the following manner:
\begin{itemize}
\item We first ask whether the corresponding $q\in \BW(\bC)$ is 0 or 1. 
This gives the first two-fold division.
\item We then replace the special element $u$ by $cu$ for some $c\in \bC$ so that we have $u=u^*$ and $u^2=\pm1$.
The sign gives the second two-fold division.
\item Finally, when $q=0$, $\rho$ is real irreducible as an ungraded module of $\bC[G]_{\varphi,\alpha}$,
and we  ask whether $\rho$ is strictly real or quaternionic.
Similarly, when $q=1$, $\rho|_{V_0}$ is real irreducible as an ungraded module of $\bC[G_0]_{\alpha}$,
and we  ask whether $\rho|_{V_0}$ is strictly real or quaternionic.
This gives the third two-fold division.
\end{itemize}

We note that when $\rho\simeq\bar\rho$,
the simple summand $A_\rho$ of $\bC[G]_{\varphi,\alpha}$
has a conjugate-linear automorphism $*$,
whose fixed points form a central simple superalgebra over $\bR$.
We recall that central simple superalgebras over $\bR$ 
can be classified in terms of $\BW(\bR)=\Z8$,
whose structure was stated exactly as above \cite[p.195]{Wall}.
We summarize the structure in the following table:
\begin{equation}
\begin{array}{c|ccc||c|ccc}
\BW(\bR)  & q & u^2 &  \rho & \BW(\bR)  & q&  u^2 &  \rho|_{V_0} \\
\hline
0 & 0 & +1 & \bR & 1 & 1 & +1 & \bR \\
2 & 0 & -1 & \bR & 3 & 1 & -1 & \bH\\
4 & 0 & +1 & \bH & 5 & 1 & +1 & \bH\\
6 & 0 & -1 & \bH & 7 & 1 & -1 & \bR
\end{array}.
\label{table}
\end{equation}

Let us now consider some examples. 
\begin{lemma}\label{clifford0}
Let $G=(\Z2)^n$. 
There is a homomorphism $\varphi^{(n)}: G\to \Z2$ and a $\Z2$-valued two-cocycle $\alpha^{(n)}$ on $G$ such that $\bC[G]_{\varphi^{(n)},\alpha^{(n)}}$ is the Clifford algebra $\Cl(n)$.
\end{lemma}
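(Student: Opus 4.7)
My plan is to proceed by induction on $n$, leveraging the standard superalgebra identity $\Cl(n)\cong \Cl(n-1)\hat\otimes \Cl(1)$ and using Lemma~\ref{grouptensor} (or more precisely, the same direct computation that establishes it, applied to the group superalgebras themselves rather than to their modules).

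For the base case $n=1$, I take $G=\Z2=\{0,1\}$, let $\varphi^{(1)}$ be the identity, and set $\alpha^{(1)}(1,1)=1$ with all other values zero. The cocycle condition on the two-element group is immediate, and the resulting $\bC[\Z2]_{\varphi^{(1)},\alpha^{(1)}}$ has an even unit $e_0$ and one odd element $e_1$ satisfying $e_1^2 = (-1)^{\alpha^{(1)}(1,1)}e_0 = -1$, which presents $\Cl(1)$. For the inductive step, assume $\bC[(\Z2)^{n-1}]_{\varphi^{(n-1)},\alpha^{(n-1)}}\cong \Cl(n-1)$. The proof of Lemma~\ref{grouptensor} shows by direct computation that, as superalgebras,
\[
\bC[G\times H]_{\varphi^\circ,\alpha^\circ}\;\cong\;\bC[G]_{\varphi,\alpha}\,\hat\otimes\,\bC[H]_{\varphi',\alpha'}.
\]
Applying this with $G=(\Z2)^{n-1}$, $H=\Z2$ and combining with the inductive hypothesis and the base case yields $\Cl(n-1)\hat\otimes\Cl(1)\cong\Cl(n)$, and defines $\varphi^{(n)}=\varphi^{(n-1)}+\varphi^{(1)}$ and $\alpha^{(n)}=\alpha^{(n-1)}+\alpha^{(1)}+\varphi^{(n-1)}\varphi^{(1)}$ on $(\Z2)^n=(\Z2)^{n-1}\times\Z2$.

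The only real subtlety is bookkeeping the Koszul sign $(-1)^{|b||c|}$ in the super tensor product against the two cocycles, so that the composite sign is itself a 2-cocycle and reproduces the Clifford relations on the $n$ distinguished generators. This is routine but must be done with care about the cocycle conventions. As a self-contained alternative, one can skip induction and give a closed-form construction: identify $(\Z2)^n$ with subsets $S\subseteq\{1,\dots,n\}$ under symmetric difference, set $\varphi^{(n)}(S):=|S|\bmod 2$, fix the ordered Clifford monomials $\gamma_S:=\gamma_{i_1}\cdots\gamma_{i_k}$ for $S=\{i_1<\dots<i_k\}$, and define $\alpha^{(n)}(S,T)\in\Z2$ by $\gamma_S\gamma_T=(-1)^{\alpha^{(n)}(S,T)}\gamma_{S\triangle T}$. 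The 2-cocycle identity for $\alpha^{(n)}$ then follows automatically from the associativity of multiplication in $\Cl(n)$, and the linear map $e_S\mapsto \gamma_S$ is, by construction, the desired superalgebra isomorphism.
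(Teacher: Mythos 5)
Your approach is essentially the same as the paper's: the paper also presents $\Cl(1)$ as a twisted group superalgebra of $\Z2$ and then observes that $(A^\circ)^{\hat\otimes n}=\Cl(n)$, invoking Lemma~\ref{grouptensor} to recognize the result as $\bC[(\Z2)^n]_{\varphi,\alpha}$. Your inductive phrasing and your closed-form alternative via subsets and ordered monomials are just repackagings of the same idea, and the remark that the cocycle identity falls out of associativity is a nice touch.

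However, there is one substantive discrepancy: you take $\alpha^{(1)}(1,1)=1$, so that $e_1^2=-e_0$, and assert that this ``presents $\Cl(1)$.'' The paper's convention is the opposite: it identifies $\Cl(1)$ with $Q(1)$, whose odd generator $\left(\begin{smallmatrix}0&1\\1&0\end{smallmatrix}\right)$ squares to $+1$, and accordingly takes $\alpha^\circ\equiv 0$, so $u^2=+1$ (compare the table~\eqref{table}, where $1\in\BW(\bR)$ has $u^2=+1$). Over $\bC$ your choice and the paper's are isomorphic superalgebras (send $e_1\mapsto\sqrt{-1}\,u$), so the literal statement of Lemma~\ref{clifford0} is still proved. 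But the twisted group superalgebra carries a canonical conjugate-linear involution $e_g^*=e_g$, and the two choices of $\alpha$ give different real forms: yours gives $\Cl^{0,1}$, i.e.\ $7\in\BW(\bR)$, not $1\in\BW(\bR)$. This matters immediately afterward: with your cocycle, a direct computation gives $\cS(\rho^\circ)=(1-\sqrt{-1})/\sqrt{2}=e^{-2\pi\sqrt{-1}/8}$, so Lemma~\ref{clifford} would read $\cS(\rho)=e^{-n\cdot 2\pi\sqrt{-1}/8}$ and the subsequent bookkeeping of $\BW(\bR)$ classes would be off by a sign. If you keep your sign convention, you must adjust the table and the exponent in Lemma~\ref{clifford} consistently; it is cleaner to simply take $\alpha^{(1)}\equiv 0$ as the paper does.
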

\begin{proof}
Take  $G^\circ=\Z2$, $\varphi^\circ:\Z2\to \Z2$ is the identity, and $\alpha^\circ\equiv 0$.
We have 
$A^\circ:=\bC[G^\circ]_{\varphi^\circ,\alpha^\circ}=\bC[u]$, where $u$ is odd and $u^2=1$.
This equals the Clifford algebra $\Cl(1)$.
We note that $(A^\circ)^{\hat\otimes n}$ is the Clifford algebra $\Cl(n)$.
From the lemma \ref{grouptensor} above, this is a twisted group superalgebra associated to $G=(\Z2)^n$ with a specific choice of $\varphi$ and $\alpha$.
\end{proof}

By comparing  the explicit description of $A^\circ=\Cl(1)$ and the table~\ref{table},
we find that $A^\circ=\Cl(1)$ corresponds to $1\in \BW(\bR)$.
Therefore $\Cl(n)$ corresponds to $n\in\BW(\bR)$.
It has a unique irreducible supermodule.

\begin{lemma}\label{clifford}
For the irreducible supermodule $\rho$ of $\Cl(n)=\bC[(\Z2)^n]_{\varphi^{(n)},\alpha^{(n)}}$ defined above,
the super Frobenius-Schur indicator is given by $$
\cS(\rho)=e^{n\times 2\pi \sqrt{-1}/8 }.
$$
\end{lemma}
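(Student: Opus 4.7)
The plan is to reduce the assertion to the case $n=1$ using the multiplicativity result Lemma~\ref{product}, and then verify that case by an explicit computation.

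First, recall from the proof of Lemma~\ref{clifford0} that $\Cl(n)$ is realised as the $n$-fold super tensor product $(A^\circ)^{\hat\otimes n}$ of $A^\circ = \Cl(1) = \bC[\Z2]_{\varphi^\circ,\alpha^\circ}$, and that Lemma~\ref{grouptensor} identifies this with the twisted group superalgebra $\bC[(\Z2)^n]_{\varphi^{(n)},\alpha^{(n)}}$ used in the statement. The unique irreducible supermodule $\rho$ of $\Cl(n)$ is therefore the $n$-fold super tensor product of the unique irreducible supermodule $\rho^\circ$ of $\Cl(1)$, i.e.\ $\rho = \rho^\circ\cdots\rho^\circ$ in the notation introduced just before Lemma~\ref{product}. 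Applying that lemma $n-1$ times gives $\cS(\rho)=\cS(\rho^\circ)^n$, so it suffices to show $\cS(\rho^\circ)=e^{2\pi\sqrt{-1}/8}$.

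For the case $n=1$, the group is $G=\{1,u\}=\Z2$ with $\varphi^\circ(1)=0$, $\varphi^\circ(u)=1$, $\alpha^\circ\equiv 0$, and $u^2=1$. The superalgebra $\Cl(1)$ has type $q=1$, and its unique irreducible supermodule $(\rho^\circ,V)$ has $\dim V_0=\dim V_1=1$. The character values are immediate: $\chi(1)=\tr_V\mathrm{id}=2$, while $\chi(u)=0$ because $\rho^\circ(e_u)$ is odd and hence has vanishing supertrace-style trace on $V_0\oplus V_1$. Substituting into the defining formula,
\begin{equation*}
\cS(\rho^\circ)=\frac{1}{\sqrt{2}\cdot 2}\Bigl(\sqrt{-1}^{\,0}\,\chi(1)+\sqrt{-1}^{\,1}\,\chi(u^2)\Bigr)=\frac{2+2\sqrt{-1}}{2\sqrt{2}}=\frac{1+\sqrt{-1}}{\sqrt{2}}=e^{2\pi\sqrt{-1}/8},
\end{equation*}
which combined with $\cS(\rho)=\cS(\rho^\circ)^n$ yields the claim.

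There is no real obstacle here: the only calculation of substance is the single direct computation for $\Cl(1)$, and the rest is bookkeeping that has already been set up by the preceding two lemmas. It is perhaps worth noting as a sanity check that this value is consistent with the identification, stated just after Lemma~\ref{clifford0}, of $\Cl(1)$ with the class $1\in\BW(\bR)\simeq\Z8$, and more generally of $\Cl(n)$ with $n\in\BW(\bR)$, as demanded by the final clause of Theorem~\ref{main}.
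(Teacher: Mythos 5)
Your proof is correct and takes exactly the same route as the paper's: Lemma~\ref{product} reduces the claim to $n=1$, which is then checked by direct computation. The paper simply states these two steps without spelling out the $n=1$ calculation; your write-up fills that in correctly (the aside that $\chi(u)=0$ is harmless but unused, since the formula only calls for $\chi(u^2)=\chi(1)=2$).
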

\begin{proof}
For $n=1$ this follows from a direct computation.
Then the statement for $n>1$ follows from a repeated application of the lemma \ref{product}.
\end{proof}
We note that the summand $\sqrt{-1}^{\varphi(g)}(-1)^{\alpha(g,g)}:(\Z2)^n\to \{\pm1,\pm\sqrt{-1}\}$
in the definition of the indicator in this case is a  $\Z4$-valued quadratic refinement of the standard non-degenerate pairing on $(\Z2)^n$, 
and that the super Frobenius-Schur indicator is its Arf-Brown invariant itself.

We also need the following lemma:\begin{lemma}\label{projFS}
Let $\bC[G]_\alpha$ be the twisted group algebra generated by $e_g$ for $g\in G$ satisfying $e_ge_h=(-1)^{\alpha(g,h)}e_{g,h}$, where $\alpha(g,h)\in \{0,1\}$ is a two-cocycle.
Its irreducible representation $\rho$ is complex, strictly real, or quaternionic if and only if its Frobenius-Schur indicator $$
S(\rho):= \frac{1}{|G|} \sum_g (-1)^{\alpha(g,g)} \chi(g^2)
$$ 
is $0$, $+1$ or $-1$, respectively.
\end{lemma}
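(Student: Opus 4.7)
The plan is to deduce Lemma~\ref{projFS} from the classical (untwisted) Frobenius--Schur theorem by replacing $\bC[G]_\alpha$ with an ordinary group algebra on a central extension. Let $\tilde G$ be the central extension of $G$ by $\Z2=\{\pm1\}$ determined by $\alpha$: as a set $\tilde G = G\times\{\pm1\}$, with multiplication
\[
(g,\epsilon)(h,\epsilon')\,:=\,\bigl(gh,\ \epsilon\epsilon'(-1)^{\alpha(g,h)}\bigr),
\]
whose associativity is precisely the two-cocycle identity on $\alpha$. Write $z:=(1,-1)$ for the generator of the central $\Z2$.

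First I would observe that representations of $\bC[G]_\alpha$ are in natural bijection with ordinary representations $\tilde\rho$ of $\tilde G$ on which $z$ acts by $-1$, via $\tilde\rho(g,\epsilon):=\epsilon\,\rho(e_g)$. This bijection preserves irreducibility, commutes with complex conjugation, and matches the trichotomy complex / strictly real / quaternionic on the two sides: a $G$-invariant real (respectively quaternionic) structure on $V$ automatically commutes with the scalar $-1$, hence extends to a $\tilde G$-invariant structure, and conversely.

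Then I would compare indicators. Direct use of the multiplication law gives $(g,\epsilon)^2=(g^2,(-1)^{\alpha(g,g)})$, and since $\tilde\rho(z)=-I$ one finds
\[
\tilde\chi\bigl((g,\epsilon)^2\bigr)\,=\,(-1)^{\alpha(g,g)}\,\chi(g^2),
\]
which is independent of $\epsilon$. Summing over $\tilde G$ and dividing by $|\tilde G|=2|G|$ then yields $S_{\tilde G}(\tilde\rho)=S(\rho)$. Applying the classical Frobenius--Schur theorem to $\tilde\rho$ completes the argument.

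The only anticipated subtlety is the matching of the trichotomy in the first step, but because the action of $z$ is pinned to the scalar $-1$ (which is automatically compatible with any real or quaternionic structure and with complex conjugation), this is a routine verification rather than a genuine obstacle.
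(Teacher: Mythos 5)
Your proof is correct but takes a genuinely different route from the paper's. The paper simply re-runs the classical Frobenius--Schur argument directly on the twisted algebra: decompose $\rho\otimes\rho$ into $\mathrm{Sym}^2\rho\oplus\wedge^2\rho$, ask whether the trivial representation appears and in which summand, conclude that $\frac{1}{|G|}\sum_g\tr\rho(e_g e_g)\in\{0,+1,-1\}$, and then substitute $\tr\rho(e_g e_g)=(-1)^{\alpha(g,g)}\chi(g^2)$. You instead untwist entirely: pass to the central extension $\tilde G$ of $G$ by $\Z2$ classified by $\alpha$, identify $\bC[G]_\alpha$-modules with $\tilde G$-modules on which the central element $z$ acts as $-1$, check that this bijection is compatible with complex conjugation and with antilinear intertwiners (hence with the real/complex/quaternionic trichotomy, since an antilinear $J$ automatically commutes with the real scalar $-1$), and transport the indicator via $\tilde\chi((g,\epsilon)^2)=(-1)^{\alpha(g,g)}\chi(g^2)$; the classical theorem then applies verbatim to $\tilde G$. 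Both arguments are standard and of comparable length. Yours has the merit of making transparent that the twisted indicator and trichotomy are nothing but the untwisted ones seen on $\tilde G$, while the paper's is self-contained and avoids the auxiliary group. One point you should make explicit: the construction of $\tilde G$ and the identification of $e_1$ with the unit use the normalization $\alpha(1,g)=\alpha(g,1)=0$. Since every $\Z2$-valued $2$-cocycle is cohomologous to a normalized one and the statement only depends on the cohomology class, this is harmless, but it is needed for $(1,+1)$ to be the identity of $\tilde G$ and for $g\mapsto(g,+1)$ to be a well-behaved set-theoretic section.
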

\begin{proof}
We repeat the standard proof which studies whether $\rho\otimes \rho$ contains the identity representation of $G$, and if so, whether it is in $\mathrm{Sym}^2\rho$ or in $\wedge^2 \rho$.
We find that $$
\frac{1}{|G|} \sum_g \tr \rho(e_g e_g) = 0, +1, -1
$$ depending on the three cases. 
Finally we use $\rho(e_g e_g)=(-1)^{\alpha(g,g)} \chi(g^2)$.
\end{proof}

We can now begin the
\begin{proof}[Proof of the main theorem]
The homomorphism $\varphi:G\to \Z2$ gives the decomposition $G=G_0\sqcup G_1$.
When $G_1$ is empty, the theorem reduces to the non-super case, our Lemma \ref{projFS}.
When $\alpha$ is trivial it further reduces to the original theorem of Frobenius-Schur \cite{FS}.
We assume $G_1$ is non-empty in the following. We then have $|G|=2|G_0|$.

We saw in lemma \eqref{product} that the super Frobenius-Schur indicator is multiplicative.
We also saw in lemma \eqref{clifford} that the super Frobenius-Schur indicator of  the irreducible supermodule $\rho^\circ$ of $\Cl(1)=\bC[G^\circ]_{\varphi^\circ,\alpha^\circ}$ is $e^{2\pi\sqrt{-1}/8}$.
Therefore, 
repeating $n$ times  the procedure of tensoring by $\rho^\circ$ and taking the irreducible component 
multiplies the indicator by  $e^{2\pi\sqrt{-1} n /8}$ 
and adds $n$ mod 8 to the class in $\BW(\bR)$.
This means that 
it suffices to prove our statement in  two cases, namely
when $\rho$ is complex and corresponds to $0\in \BW(\bC)$, or
when $\rho$ is real and corresponds to $0\in \BW(\bR)$.
Below, we assume that either of the two is satisfied.

We note that $\cS(\rho)$ can be rewritten as \begin{equation}
\cS(\rho)=\frac{1}{\sqrt{2}^q} \left[
\frac{1}{|G|} \sum_{g\in G_0} (-1)^{\alpha(g,g)} \chi(g^2)
+ 
\frac{\sqrt{-1}}{|G|} (\sum_{g\in G}-\sum_{g\in G_0})  (-1)^{\alpha(g,g)} \chi(g^2) 
\right]
\label{rewrite}
\end{equation}
This means that $\cS(\rho)$ can be computed if the ordinary Frobenius-Schur indicators
of $\rho$ as representations of $G$ and $G_0$ can be computed.

Denote $V=V_0\oplus V_1$ the graded vector space on which $\rho$ is defined.
Due to our assumption, $V$ corresponds to $0\in \BW(\bC)$.
Therefore 
$V$ is irreducible as an ungraded module of $\bC[G]_{\varphi,\alpha}$.
Since $G_1$ is nonempty, 
$\dim V=2\dim V_0$ and $G_0$ is an index-2 subgroup of $G$.
Therefore, 
$V_0$ is an irreducible module of $\bC[G_0]_\alpha$
and $V$ is the induced representation.
Therefore, $V$ is real if $V_0$ is real,
and $V_0$ is  complex if $V$ is complex. 
Therefore, when $\rho$ is complex, both $V$ and $V_0$ are complex,
and   $\cS(\rho)=0$  from \eqref{rewrite}.
Similarly, when $\rho$ is real, both $V$ and $V_0$ are strictly real,
and therefore $\cS(\rho)=1$ from \eqref{rewrite}.
This concludes the proof.
\end{proof}

There is also an alternative proof, based on an earlier work by Roderick Gow \cite{Gow}, which we briefly indicate below\footnote{%
The following three paragraphs were added in v2.  The authors thank J. Murray for the information.
}. 
Given an index-2 subgroup $G_0$ of $G$ and an irreducible representation $\rho_0$,
Gow considered an analogue of the Frobenius-Schur indicator \begin{equation}
\eta(\rho_0):=\frac{1}{|G_0|}\sum_{g\in G\setminus G_0} \chi(g^2),
\end{equation}
which we call the Gow indicator.
In \cite[Lemma 2.1]{Gow}, it was proved that $\eta(\rho_0)=+1, 0, -1$, whose value depends on $S(\rho_0)$ and various properties of the $G$ representation $\mathrm{Ind}_{G_0}^G(\rho_0)$ induced from $\rho_0$ of $G_0$; see also \cite[Sec.~2]{KawanakaMatsuyama}.
The analysis there can be readily generalized to the case when the $\Z2$-valued 2-cocycle $\alpha$ is nontrivial.

For us, we have $\rho_0=\rho|_{V_0}$ and
$\rho=\mathrm{Ind}_{G_0}^G(\rho_0)$.
Then $q=0,1$ depending on whether $\rho$ is irreducible or not as a $G$ representation.
We can then straightforwardly show that  \begin{equation}
\mathcal{S}(\rho) = \frac{1}{\sqrt{2}^q} \left(S(\rho_0) + \sqrt{-1}\eta(\rho_0) \right).
\end{equation}
This allows us to determine our super Frobenius-Schur indicator from the description of the Gow indicator in  \cite[Lemma 2.1]{Gow}, by carefully comparing its proof and our table \eqref{table}.
This gives an alternative proof.

Another related point is that the element of the Brauer-Wall group associated to an irreducible representation $\rho$ of an index-2 subgroup $G_0$ of $G$ was studied in detail in \cite{Turull}.
The discussion there was  for an arbitrary base field, and no explicit formula was given for the base field $\mathbb{R}$ as in our main theorem.

\section{Relation to two-dimensional finite group gauge theories on pin$^-$ surfaces }
\label{sec:background}

According to physicists, quantum gauge theories are `defined' in terms of path integrals.
We start from a $d$-dimensional manifold $M_d$,
and consider the moduli space $\mathcal{M}$ of connections $A$ on principal $G$-bundles on $M_d$.
We pick a function $\mathbb{S}:\mathcal{M}\to \bC/\bZ$ called the action.
The fundamental object of physicists' studies is the partition function \begin{equation}
Z_{G,\mathbb{S}}(M_d) := \int_{\mathcal{M}} e^{2\pi\sqrt{-1} \mathbb{S}} d\mu
\end{equation}
where $d\mu$ is a suitable measure on $\mathcal{M}$.
Of course all this is terribly ill-defined in most of the physically relevant cases when $G$ is a compact Lie group, since $\mathcal{M}$ is heavily infinite-dimensional.

Finite group gauge theories, however, admits a path-integral definition without any problems, since the space $\mathcal{M}$ over which we need to integrate is a finite set,
which is the space of homomorphisms $f:\pi_1(M_d) \to G$ up to conjugation.
The proper measure $d\mu$ to be used was determined e.g.~in \cite{Freed:1991bn},
and the partition function is simply \begin{equation}
Z_{G,\mathbb{S}}(M_d) := \frac{1}{|G|}\sum_{f:\pi_1(M_d)\to G} e^{2\pi\sqrt{-1} \mathbb{S}(f)}
\end{equation}
for a closed and connected manifold $M_d$.
Here we only described the partition function, but they can be promoted to  topological  field theories in the sense of Atiyah-Segal, and further to extended topological field theories.

When the space $M=\Sigma$ is two-dimensional and the action $\mathbb{S}$ is trivial,
this `path integral' was considered independently of physics in \cite{Mednykh},
which says the following.
Let $\Sigma$ be a two-dimensional oriented surface of genus $\gamma$
which is closed and connected.
We then have \begin{equation}
\frac{1}{|G|} \sum_{f: \pi_1(\Sigma)\to G } 1  = \sum_{\rho\in \Irr(G)} \left(\frac{|G|}{ \dim \rho}\right)^{-e(\Sigma)}
\label{Med}
\end{equation}
where the sum on the left hand side is over homomorphisms $f$ from $\pi_1(\Sigma)$ to $G$,
the sum on the right hand side is over isomorphism classes of irreducible representations of $G$,
and $e(\Sigma)=2-2\gamma$ is the Euler number of the surface.
When $\gamma=0$ this reduces to the classic formula $|G|=\sum_{\rho\in\Irr(G)} (\dim\rho)^2$.

\if0
This can be generalized to surfaces with boundaries. 
For example, let $\Sigma$ be a sphere with three boundaries, and consider $G$-bundles whose holonomies around three boundaries are conjugate to $a_{1,2,3}\in G$, respectively.
The corresponding formula is \begin{equation}
\frac{1}{|G|} \sum_{ 
(g_1 a_1 g_1^{-1})  (g_2 a_2 g_2^{-1}) (g_3 a_3 g_3^{-1})=1
} 1 =  \sum_{\rho\in \Irr(G)} \frac{ |G|}{\dim\rho} \chi_\rho(a_1) \chi_\rho(a_2) \chi_\rho(a_3)
\end{equation} where the sum on the left hand side is over $g_{1,2,3}\in G$
and $\chi_\rho$ is the character of the representation $\rho$.
\fi 

For the detailed history of these formulas and  very elementary proofs, see \cite{Snyder}.
This formula is so classic that it appears as exercises of textbooks of finite group theory, e.g.~\cite{Serre}.

The formula \eqref{Med} can be further generalized in many directions. 
One example starts by picking a degree-2 cohomology class $\alpha\in H^2(BG,\bR/\bZ)$.
A homomorphism $f:\pi_1(\Sigma)\to G$ determines a classifying map $f:\Sigma\to BG$ of the corresponding $G$-bundle, which we denoted by the same letter.
We now take the pull-back $f^*(\alpha)$ of $\alpha$ and integrate it against the fundamental class, the result of which we denote by $\int_\Sigma f^*(\alpha)$.
We use it as the action $\mathbb{S}$ and the generalized formula is now given by \begin{equation}
\frac{1}{|G|} \sum_{f: \pi_1(\Sigma)\to G } e^{2\pi \sqrt{-1} \int_\Sigma f^*(\alpha)}  = \sum_{\rho\in \Irr_\alpha(G)} \left(\frac{|G|}{ \dim \rho}\right)^{-e(\Sigma)},
\end{equation}
where $\Irr_\alpha(G)$ is now the set of isomorphism classes of irreducible projective representations of $G$ whose degree-2 cocycle is specified by $\alpha$.
The use of the pull-back of a cohomology class of the classifying space as the action goes back to \cite{Dijkgraaf:1989pz}. 
This particular formula, appropriately generalized to compact Lie group $G$, first appeared in \cite{Witten:1991we} to the authors' knowledge.
For early rigorous mathematical exposition, see \cite{Freed:1991bn}.

The simplest way to allow $\Sigma$ to be non-orientable is to take $\alpha\in H^2(BG,\Z2)$ instead, since any surface $\Sigma$, non-orientable or otherwise, has a fundamental class $[\Sigma]\in H_2(\Sigma,\Z2)$ and we can pair $f^*(\alpha)$ with it.
The formula now involves the Frobenius-Schur indicator discussed in Lemma~\ref{projFS},
\begin{equation}
\frac{1}{|G|} \sum_{f: \pi_1(\Sigma)\to G } e^{2\pi \sqrt{-1} \int_\Sigma f^*(\alpha)}  = \sum_{\substack{\rho\in \Irr_\alpha(G),\\
 \rho:\text{real}}} S(\rho)^{o(\Sigma)}  \left(  \frac{|G| }{ \dim \rho }\right)^{-e(\Sigma)}
\end{equation}
where $o(\Sigma)=0,1$ is the class $[\Sigma]\in \Omega^\text{unoriented}_{d=2}=\Z2$
and equals $o(\Sigma)=e(\Sigma)$ mod 2.
This formula also already appeared in \cite{Witten:1991we};
for a rigorous mathematical exposition, see \cite{Turaev}.
We also note that for $\Sigma=\mathbb{RP}^1$ and for trivial $\alpha$, the formula reduces to \begin{equation}
 \sum_{g^2=1} 1 = \sum_{\rho\in\Irr(G)} S(\rho) {\dim \rho},
\end{equation}
a formula which appears already in the very paper of Frobenius and Schur \cite{FS}.

More recently, it has become of interest to consider finite group gauge theories on spacetimes with spin structures and variants such as pin$^\pm$ structures.
The motivation came both from developments internal to mathematical physics, see e.g.~\cite{Barrett:2013kza,Novak:2014oca} 
and also from influences from theoretical condensed matter physics, see e.g.~\cite{Kapustin:2014dxa,Gaiotto:2015zta,Wang:2018edf,Guo:2018vij}.

Now, what type of actions $\mathbb{S}$ should we consider?
As was first emphasized in \cite{Freed:2004yc},
natural choices of the topological part of the action should be given by 
invertible quantum field theories; 
for details of invertible quantum field theories, the readers are referred to excellent lecture notes by Freed \cite{FreedLectures}.
For a finite group gauge theory, they are specified by
bordism invariants \begin{equation}
\phi\in \Hom(\Omega^\text{structure}_d(BG),\bR/\bZ)
\end{equation}
where `structure' in the equation stands for `unoriented', `oriented', `spin' etc., 
appropriate for the purpose,
and $d$ is the space-time dimension.
Therefore, we are led to consider the sum \begin{equation}
\frac{1}{|G|} \sum_{f:\pi_1(M_d)\to G} e^{2\pi\sqrt{-1} \phi([f:M_d\to BG])},
\end{equation} 
where $M_d$ is a $d$-dimensional closed and connected manifold equipped with the structure of your choice.

For oriented surfaces with a $G$ bundle, we have \begin{equation}
\Hom(\Omega^\text{oriented}_2(BG),\bR/\bZ)= H^2(BG,\bR/\bZ),
\end{equation} corresponding to the Dijkgraaf-Witten theories we recalled above.
For oriented spin surfaces with a $G$ bundle, we have an equality as sets\begin{equation}
\Hom(\Omega^\text{spin}_2(BG),\bR/\bZ)=H^1(BG,\Z2)  \times H^2(BG,\bR/\bZ),
\end{equation} where the group structure on the right hand side is given by \begin{equation}
(\varphi,\alpha)+(\varphi',\alpha')=(\varphi+\varphi', \alpha + \alpha' + \iota(\varphi\varphi')).
\label{add}
\end{equation}
Here $\iota$ is the homomorphism $\Z2\to \bR/\bZ$ obtained by sending $1$ to $1/2$,
see  \cite{BF3,BF4} where $d=2,3,4$ were treated in parallel.

An element $\phi=(\varphi,\alpha)\in \Hom(\Omega^\text{spin}_2(BG),\bR/\bZ)$
is paired with a $G$-bundle $f:\Sigma\to BG$ on a spin surface in the following manner.
We first recall \cite{Atiyah,Johnson} that a spin structure on a spin surface can be identified with the quadratic refinement $Q: H^1(\Sigma,\Z2)\to \Z2$, i.e.~those functions which satisfy \begin{equation}
Q(a+b)-Q(a)-Q(b)=\int_\Sigma a\cup b
\end{equation} and $Q(0)=0$.
Then we evaluate $\phi$ by the formula \begin{equation}
e^{2\pi \sqrt{-1} \phi([f:\Sigma\to BG])} := (-1)^{Q(f^*(\varphi))} e^{2\pi\sqrt{-1}f^*(\alpha)}.
\end{equation}
The term $\iota(\varphi\varphi')$ in \eqref{add} follows by using the last two formulas.

The generalization of the formula \eqref{Med} is given by 
 \begin{multline}
 \qquad\qquad
\frac{1}{|G|} \sum_{f: \pi_1(\Sigma)\to G } (-1)^{Q(f^*(\varphi))} e^{2\pi \sqrt{-1} \int_\Sigma f^*(\alpha)}  = \\
\qquad\qquad\qquad\qquad\qquad
\sum_{\rho\in \Irr(\bC[G]_{\varphi,\alpha})} (-1)^{\Arf(\Sigma) q(\rho)} \left(  \frac{ |G| }{ \qdim \rho } \right)^{-e(\Sigma)},
 \qquad\qquad
\end{multline}
where the sum on the right hand side is now over the irreducible supermodules $\rho$ of $\bC[G]_{\varphi,\alpha}$ discussed above,
$q(\rho)\in \BW(\bC)=\{0,1\}$ is the type of the supermodule $\rho$,
$\qdim(\rho)=\dim(\rho)/\sqrt{2}^{q(\rho)}$,
and the Arf invariant $\Arf(\Sigma)=\Z2$ is the class $[\Sigma]\in \Omega_2^\text{spin}=\{0,1\}$ of the spin surface in the bordism group.
This formula was derived in \cite{Gunningham}; 
the presentation there was mainly for the case $G=((\Z2)^n)\rtimes S_n$ with a specific choice of $\varphi$ and $\alpha$, for which $\bC[G]_{\varphi,\alpha}$ is known as the Sergeev algebra, but the discussion there easily generalizes. 

Our super Frobenius-Schur indicator plays a role in the case of unoriented surfaces $\Sigma$ equipped  with a pin$^-$ structure.
In this case we have an equality of sets\cite{BFq}
\begin{equation}
\Hom(\Omega^\text{pin$^-$}_2(BG),\bR/\bZ)=H^1(BG,\Z2)  \times H^2(BG,\Z2) 
\end{equation}
where the addition formula on the right hand side is
\begin{equation}
(\varphi,\alpha)+(\varphi',\alpha')=(\varphi+\varphi', \alpha + \alpha' + \varphi\varphi').
\end{equation}
Note that this has the same form as the formula we saw in Lemma~\ref{grouptensor}.

To evaluate $\phi=(\varphi,\alpha)\in \Hom(\Omega^\text{pin$^-$}_2(BG),\bR/\bZ)$,
we now use the fact that a pin$^-$ structure on a surface is given by a $\Z4$-valued quadratic refinement $\cQ:H^1(\Sigma,\Z2)\to \Z4$ satisfying \begin{equation}
\cQ(a+b)-\cQ(a)-\cQ(b)=2\int_\Sigma a\cup b
\end{equation} and $\cQ(0)=0$,
see \cite{KirbyTaylor} for this and other basic properties of  pin$^-$ structures.
Then the evaluation is given by
\begin{equation}
e^{2\pi \sqrt{-1} \phi([f:\Sigma\to BG])} := \sqrt{-1}^{\cQ(f^*(\varphi))} e^{2\pi\sqrt{-1}f^*(\alpha)}.
\end{equation}

Finite-group gauge theories on pin$^-$ surfaces for the case when the twisted group superalgebra  is the Clifford algebra $\Cl(n)$ we saw in Lemma~\ref{clifford0} were studied e.g.~in \cite{DebrayGunningham,Turzillo:2018ynq,Kobayashi:2019xxg}.
Again the discussion there can be readily generalized, given our super Frobenius-Schur indicator.
The final result when $\Sigma$ is non-orientable is
 \begin{multline}
 \qquad\qquad
\frac{1}{|G|} \sum_{f: \pi_1(\Sigma)\to G } \sqrt{-1}^{\cQ(f^*(\varphi))} e^{2\pi \sqrt{-1} \int_\Sigma f^*(\alpha)}  = \\
\sum_{\substack{\rho\in \Irr(\bC[G]_{\varphi,\alpha}),\\
\rho:\text{real}}}
 \cS(\rho)^{\ABK(\Sigma)} \left(  \frac{ |G| }{ \qdim \rho }\right)^{-e(\Sigma)},
 \qquad\qquad
\end{multline}
where the Arf-Brown-Kervaire invariant $\ABK(\Sigma) \in \{0,1,\ldots,7\}$ of the pin$^-$ surface is the class $[\Sigma] \in \Omega^\text{pin$^-$}_{d=2}=\Z8$.
The defining equation of the super Frobenius-Schur indicator in the statement of Theorem~\ref{main} is the literal path integral expression of the partition function of the gauge theory on the M\"obius strip, when the state on the boundary is specified by $\rho$.

\bibliographystyle{ytamsalpha}
\bibliography{ref}

\end{document}